\documentclass[11pt]{amsart}
\usepackage{amsfonts, amsmath, amssymb, amscd, amsthm, graphicx,enumerate}
\usepackage{epsfig, color}
\usepackage{hyperref, stmaryrd}

\hoffset -1.8cm \voffset -1.1cm \textwidth=6.5in \textheight=8.5in
\tolerance=9000 \emergencystretch=5pt \vfuzz=2pt
\parskip=1.5mm

\makeatletter
\def\blfootnote{\xdef\@thefnmark{}\@footnotetext}
\makeatother

%\usepackage{showkeys}

% THEOREM Environments ---------------------------------------------------
\newtheorem{thm}{Theorem}
\newtheorem{cor}[thm]{Corollary}
\newtheorem{lem}[thm]{Lemma}

% Alpha labelled theorems

% New Fonts ---------------------------------------------------------------
\newfont{\eufm}{eufm10}

% New Commands ------------------------------------------------------------

\renewcommand{\phi}{\varphi}

\newcommand{\Z}{\mathbb Z}

\def\fix{\mathrm{Fix}}
\def\axis{\mathrm{Axis}}
\def\stab{\mathrm{Stab}}

\def\mc {\mathcal}

\begin{document}

\title{Splittings of right-angled Artin groups}

\author{M. Hull}
\address{}
\email{}

\date{}
\maketitle

\begin{abstract}
We show that if a right-angled Artin group $A(\Gamma)$ has a non-trivial, minimal action on a tree $T$ which is not a line, then $\Gamma$ contains a separating subgraph $\Lambda$ such that $A(\Lambda)$ stabilizes an edge in $T$.
\end{abstract}

\section{Introduction}
Given a graph $\Gamma$, let $A(\Gamma)$ denote the associated \emph{right-angled Artin group}. That is, $A(\Gamma)$ is the group given by the presentation
\[
\langle V(\Gamma)\;|\; [u, v]=1 \text{ whenever } (u, v)\in E(\Gamma)\rangle.
\]

A natural question in the study of right-angled Artin groups is how are the graph-theoretic properties of $\Gamma$ related to group-theoretic properties of $A(\Gamma)$? We consider this question in the context of splittings of the group $A(\Gamma)$. Here by a splitting of a group $G$ we mean a graph of groups decomposition of $G$ and by a splitting over a subgroup $H$ we mean a graph of groups decomposition where $H$ is an edge group. By Bass-Serre theory, for any such splitting there is an action of $G$ on a tree\footnote{All trees in this paper are assumed to be simplicial trees.} (called the \emph{Bass-Serre tree} of the splitting) and conversely any action of $G$ on a tree has a corresponding splitting where the edge groups are stabilizers of edges in the tree.

 It is well-known that $A(\Gamma)$ splits as a non-trivial free product if and only if $\Gamma$ is disconnected. Clay showed that $A(\Gamma)$ splits over $\mathbb Z$ if and only if $\Gamma$ contains a cut-vertex \cite{C}. Groves and the author generalized Clay's result to show that unless $\Gamma$ is a complete graph, $A(\Gamma)$ splits over an abelian subgroup if and only if $\Gamma$ contains a cut-clique, that is a complete subgraph $\Lambda$ such that $\Gamma\setminus\Lambda$ is disconnected \cite{GH}.

One direction of these implications is straightforward and follows from the general observation that if $\Lambda$ is a separating subgraph of $\Gamma$, that is a subgraph such that $\Gamma\setminus \Lambda$ is disconnected, then $A(\Gamma)$ splits over $A(\Lambda)$ as an amalgamated product
\[
A(\Gamma)\cong A(\Gamma_1\cup\Lambda)\ast_{A(\Lambda)}A(\Gamma_2\cup\Lambda)
\]
where $\Gamma_1$ is a connected component of $\Gamma\setminus\Lambda$ and $\Gamma_2=\Gamma\setminus(\Gamma_1\cup\Lambda)$. 

Another way to construct splittings of $A(\Gamma)$ is to consider actions of $A(\Gamma)$ on a line. Any such action will produce a splitting of $A(\Gamma)$ as an HNN-extension over the kernel of the action. When $\Gamma$ is connected the set of such actions is equivalent to the set homomorphisms from $A(\Gamma)\to\Z$. For example, $A(\Gamma)$ splits as an HNN-extension over the Bestvina-Brady subgroup which is the kernel of the homomorphism $A(\Gamma)\to\Z$ defined by sending each generator of $A(\Gamma)$ to $1$. Homomorphisms $A(\Gamma)\to\Z$ all factor through the abelianization of $A(\Gamma)$ which is isomorphic to $\Z^n$ (where $n$ is the number of vertices of $\Gamma$). Hence the set of homomorphisms $\mathbb Z^n\to \Z$ gives a parameterization of these types of splittings of $A(\Gamma)$.  

We now consider splittings of $A(\Gamma)$ where the corresponding Bass-Serre tree is not a line. An action of a group $G$ on a tree $T$ is called \emph{minimal} if $T$ has no proper $G$--invariant subtrees and \emph{non-trivial} if there is no point of $T$ which is fixed by all elements of $G$.

\begin{thm}\label{mainthm}
Suppose $A(\Gamma)$ has a non-trivial minimal action on a tree $T$ which is not a line. Then $\Gamma$ has an induced subgraph $\Lambda$ such that $\Gamma\setminus\Lambda$ is disconnected and $A(\Lambda)$ stabilizes an edge in $T$.
\end{thm}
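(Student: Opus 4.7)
My plan is to classify each standard generator of $A(\Gamma)$ by its action on $T$---elliptic (fixing a point) or hyperbolic (translating along an axis)---and then locate an edge $e \subseteq T$ whose pointwise stabilizer contains a separating induced subgraph. Given any edge $e$, let $\Lambda_e$ denote the full subgraph of $\Gamma$ on the vertex set $\{v \in V(\Gamma) : v\cdot e = e\}$; by construction $A(\Lambda_e) \subseteq \stab(e)$, so it suffices to produce a single edge $e$ for which $\Gamma \setminus \Lambda_e$ is disconnected.

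I would first record the rigidity commutativity imposes on isometries of a simplicial tree: if $u$ and $w$ commute, then either (i) both are hyperbolic and share an axis, (ii) one is elliptic and the other hyperbolic, in which case the elliptic one fixes the hyperbolic's axis pointwise, or (iii) both are elliptic, in which case their fixed subtrees meet (the bridge between two disjoint invariant subtrees would be preserved by both elements, forcing a common fixed endpoint). Consequently, for any hyperbolic generator $v$ with axis $\ell$, every element of $\link(v)$ preserves $\ell$, and the elliptic ones fix $\ell$ pointwise.

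Next I would split into two cases. In the hyperbolic case, some generator $v$ is hyperbolic with axis $\ell$; because $T$ is minimal and not a line, not every element of $A(\Gamma)$ can preserve $\ell$, so there is a generator $u \in V(\Gamma)$ with $u\ell \neq \ell$, and the configuration $\ell \cup u\ell$ exhibits genuine branching of $T$. I would pick $e$ to be an edge of $\ell$ just before $\ell$ and $u\ell$ diverge and take $\Lambda := \Lambda_e$; note that $u \notin \Lambda$ (since $u$ fixing $e$ pointwise would force $u\ell = \ell$) while every elliptic member of $\link(v)$ lies in $\Lambda$. The separation claim is then that $v$ and $u$, both in $V(\Gamma)\setminus\Lambda$, lie in distinct connected components of $\Gamma\setminus\Lambda$: a hypothetical $\Gamma$-path $v = v_0, v_1, \dots, v_k = u$ of pairwise-commuting generators lying outside $\Lambda$ would, by iterating (i)--(iii), propagate preservation of $\ell$ from $v_0$ along the path, ultimately forcing $u\ell = \ell$. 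In the all-elliptic case, each generator $v$ has a nonempty fixed subtree $T_v$ with $T_v \cap T_w \neq \emptyset$ whenever $\{v,w\} \in E(\Gamma)$ by (iii), yet $\bigcap_v T_v = \emptyset$ by non-triviality; a Helly-type analysis of the nerve $\{T_v\}$ together with minimality should again yield an edge at the ``boundary'' of the fixed structure whose stabilizer separates $\Gamma$.

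The principal obstacle I foresee is precisely the separation step. The induction above is most delicate when a generator $v_i$ along the hypothetical path is elliptic and merely preserves $\ell$ setwise (say as a reflection) rather than fixing it pointwise, since then commutation with $v_{i+1}$ does not directly force $v_{i+1}$ to preserve $\ell$. Handling this requires a more careful local analysis, tracking not only which generators preserve $\ell$ but which edge of $\ell$ they move $e$ to, and extracting from such a path either a common fixed set violating non-triviality or an $A(\Gamma)$-invariant line violating the hypothesis that $T$ is not a line.
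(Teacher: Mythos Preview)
Your overall strategy---split into the all-elliptic and some-hyperbolic cases and in each case exhibit an edge $e$ with $\Gamma\setminus\Lambda_e$ disconnected---is exactly the paper's. Two points deserve correction.

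First, the ``principal obstacle'' you flag does not exist. In the hyperbolic case your own dichotomy (i)/(ii) already shows that a neighbour of a hyperbolic element with axis $\ell$ is either hyperbolic with the same axis $\ell$ or elliptic and fixes $\ell$ \emph{pointwise}; an elliptic reflection of $\ell$ cannot commute with a nontrivial translation of $\ell$. Hence if you propagate the stronger invariant ``$v_i$ is hyperbolic with axis $\ell$'' (rather than merely ``$v_i$ preserves $\ell$''), the induction goes through cleanly: $v_{i+1}\notin\Lambda_e$ rules out the elliptic alternative (since fixing $\ell$ pointwise entails fixing $e$), forcing $v_{i+1}$ hyperbolic with axis $\ell$, and at the end $u\ell=\ell$ contradicts the choice of $u$. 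Incidentally, this shows your careful placement of $e$ ``just before the branch point'' is unnecessary: the paper simply takes $\Lambda$ to be the set of elliptic generators fixing all of $\ell$ pointwise, so that \emph{any} edge of $\ell$ serves, and the vertex witnessing disconnection is any generator not preserving $\ell$ setwise.

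Second, your all-elliptic case is only a gesture; ``a Helly-type analysis \dots\ should yield an edge'' is not a proof, and minimality plays no role there. The concrete argument is: by the Helly property for subtrees, non-triviality gives two generators $v,u$ with $\fix(v)\cap\fix(u)=\emptyset$; take $e$ to be any edge on the unique geodesic bridge between these two fixed subtrees. For any path $v=v_0,v_1,\dots,v_n=u$ in $\Gamma$, your observation (iii) makes $\bigcup_{i}\fix(v_i)$ connected, hence it contains the bridge and in particular $e$, so some $v_i\in\Lambda_e$. Thus $v$ and $u$ lie in distinct components of $\Gamma\setminus\Lambda_e$.
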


We note that the edge group in a given splitting of $A(\Gamma)$ may be strictly larger than the subgroup $A(\Lambda)$ produced in the above theorem. For example, if there is an epimorphism $f\colon A(\Gamma_1)\to A(\Gamma_2)$ and $\Lambda_2$ is a separating subgraph of $\Gamma_2$, then the splitting of $A(\Gamma_2)$ over $A(\Lambda_2)$ induces a splitting of $A(\Gamma_1)$ over $f^{-1}(A(\Lambda_2))$. The subgraph $\Lambda_1$ constructed in Theorem \ref{mainthm} will consist of those vertices of $\Gamma_1$ which $f$ maps into $A(\Lambda_2)$. As long as some element of $\ker(f)$ does not belong to $A(\Lambda_1)$, $A(\Lambda_1)$ will be a proper subgroup of $f^{-1}(A(\Lambda_2))$.

Nevertheless, it is common to consider all possible splittings over a particular family of subgroups, for example the family of all abelian subgroups. If $\mc{A}$ is any family of subgroups of a group $G$, then a splitting of $G$ is called an $\mc{A}$--splitting if all edge groups of the splitting belong to $\mc{A}$.
\begin{cor}
Let $\Gamma$ be a connected graph and let $\mc{A}$ be a family of subgroups of $A(\Gamma)$ such that $\mc{A}$ is closed under taking subgroups. If $A(\Gamma)$ has a non-trivial $\mc{A}$--splitting then either there exists a homomorphism $\phi\colon\ A(\Gamma)\to\Z$ with $\ker(\phi)\in\mc{A}$ or $\Gamma$ has a separating subgraph $\Lambda$ such that $A(\Lambda)\in\mc{A}$.
\end{cor}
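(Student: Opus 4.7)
The plan is to translate the splitting into a group action on a tree via Bass-Serre theory, pass to the minimal invariant subtree $T'$, and case-split according to whether $T'$ is a line. Let $T$ be the Bass-Serre tree of the given non-trivial $\mc{A}$-splitting; then $A(\Gamma)$ acts on $T$ without a global fixed point and every edge stabilizer lies in $\mc{A}$. Since $A(\Gamma)$ is finitely generated there is a unique minimal invariant subtree $T' \subseteq T$; the restricted action is minimal and still non-trivial (otherwise $T'$ would be a single point, contradicting the absence of a global fixed point on $T$). Edge stabilizers in $T'$ are edge stabilizers in $T$, so they still lie in $\mc{A}$.

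If $T'$ is not a line, Theorem \ref{mainthm} furnishes an induced separating subgraph $\Lambda \subseteq \Gamma$ and an edge $e \in T'$ with $A(\Lambda) \subseteq \stab(e)$; since $\stab(e) \in \mc{A}$ and $\mc{A}$ is closed under taking subgroups, $A(\Lambda) \in \mc{A}$, giving the second alternative of the corollary.

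The main obstacle is the line case. I would identify $T'$ with $\R$, so the action yields a homomorphism $\rho \colon A(\Gamma) \to \mathrm{Isom}(\R) = \R \rtimes \Z/2$, and partition $V(\Gamma)$ into three subsets $V_{\mathrm{tr}}$, $V_{\mathrm{refl}}$, $V_{\mathrm{triv}}$ of generators acting, respectively, as non-trivial translations, as reflections, and trivially. The defining relations of $A(\Gamma)$ impose two constraints: no vertex of $V_{\mathrm{tr}}$ is adjacent to a vertex of $V_{\mathrm{refl}}$ (a non-trivial translation of $\R$ never commutes with a reflection), and any two adjacent vertices of $V_{\mathrm{refl}}$ share the same fixed point (commuting reflections of $\R$ have the same center). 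If $V_{\mathrm{refl}} = \emptyset$, then $\rho$ takes values in the translation subgroup, hence in $\Z$ by simpliciality, yielding $\phi\colon A(\Gamma)\to\Z$ whose kernel equals the pointwise stabilizer of $\R$, i.e.\ the common edge stabilizer, which lies in $\mc{A}$.

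If instead $V_{\mathrm{refl}} \neq \emptyset$, I would argue that $V_{\mathrm{triv}}$ is a separating subgraph of $\Gamma$ and that $A(V_{\mathrm{triv}}) \in \mc{A}$. Connectedness of $\Gamma$ together with the first constraint rules out $V_{\mathrm{triv}} = \emptyset$ when both $V_{\mathrm{tr}}$ and $V_{\mathrm{refl}}$ are non-empty; moreover, if $V_{\mathrm{tr}} = V_{\mathrm{triv}} = \emptyset$, then the second constraint and connectedness force every reflecting generator to share the same center, producing a global fixed point and contradicting non-triviality. Hence $V_{\mathrm{triv}}$ is non-empty, and after removing it the graph is disconnected: by the first constraint $V_{\mathrm{tr}}$ is separated from $V_{\mathrm{refl}}$, and by the second constraint distinct reflection centers sit in different components of $V_{\mathrm{refl}}$. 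Since every vertex of $V_{\mathrm{triv}}$ acts trivially on $T'$, $A(V_{\mathrm{triv}})$ is contained in the common edge stabilizer, and therefore belongs to $\mc{A}$. This secures the second alternative in this case and completes the plan.
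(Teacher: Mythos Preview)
Your argument is correct and follows the same dichotomy the paper sets up: pass to the minimal subtree, apply Theorem~\ref{mainthm} when it is not a line, and analyse the induced homomorphism to $\mathrm{Isom}(\mathbb R)$ when it is. The paper does not give an explicit proof of the corollary; it is stated as immediate from Theorem~\ref{mainthm} together with the preceding remark that, for connected $\Gamma$, actions on a line ``are equivalent to'' homomorphisms $A(\Gamma)\to\Z$.

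Your treatment of the line case is in fact more careful than that remark. The paper's sentence tacitly ignores the possibility that the image in $\mathrm{Isom}(\mathbb R)$ contains reflections, and such actions do occur (e.g.\ for $\Gamma$ the path $a\!-\!b\!-\!c$, send $a,c$ to reflections with distinct centers and $b$ to the identity). You handle this dihedral subcase explicitly and show it lands in the \emph{separating subgraph} alternative rather than the \emph{homomorphism to $\Z$} alternative. One small point worth making explicit: when $V_{\mathrm{tr}}=\emptyset$ you use that there are at least two distinct reflection centers; this follows because otherwise every generator (reflecting or trivial) fixes the common center, contradicting non-triviality --- you prove this when $V_{\mathrm{triv}}=\emptyset$ but it is needed (and equally easy) when $V_{\mathrm{triv}}\neq\emptyset$ as well. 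With that clause added, the disconnection of $\Gamma\setminus V_{\mathrm{triv}}$ is fully justified.
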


\noindent{\bf Acknowledgements.}  The author thanks Talia Fern\'os for asking what would happen if we considered splittings over non-abelian subgroups and for helpful comments.
\section{Proof}
The proof of Theorem \ref{mainthm} only uses elementary properties of group actions on trees and right-angled Artin groups which we now describe.

Let $G$ be a group acting on a tree $T$. For an element $g\in G$, let $\fix(g)$ denote the set $\{x\in T\;|\; gx=x\}$. If $\fix(g)\neq \emptyset$, the $g$ is called \emph{elliptic}. In this case $\fix(g)$ is connected and hence a subtree of $T$. If $g\in G$ is not elliptic, then $g$ is \emph{hyperbolic} which means that  $T$ contains a unique line which is fixed by $g$ set-wise and on which $g$ acts as a non-trivial translation. In this case the corresponding line is called the \emph{axis} of $g$ which we denote by $\axis(g)$. If $g$ is hyperbolic and $h$ is any element of $G$, then $h^{-1}gh$ is hyperbolic with axis $h(\axis(g))$. In particular, if $g$ and $h$ commute, then $h$ fixes the axis of $g$ set-wise. The next lemma follows easily from this observation.

\begin{lem}\label{L1}
Suppose that a group $G$ acts on a tree $T$ and that $g,h \in G$ are commuting elements with $g$ acting hyperbolically on $T$.
\begin{enumerate}
\item If $h$ is elliptic, then $\axis(g)\subseteq \fix(h)$.
\item If $h$ is hyperbolic, then $\axis(g)=\axis(h)$.
\end{enumerate}
\end{lem}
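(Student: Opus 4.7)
The plan is to exploit the observation stated in the paragraph preceding the lemma: since $g$ and $h$ commute, $h^{-1}gh = g$ is hyperbolic with axis $h(\axis(g))$, so $h(\axis(g)) = \axis(g)$. Thus $h$ restricts to an isometry of the line $\axis(g)$, and the whole argument reduces to classifying which line-isometries are compatible with the hypotheses on $h$.

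For part (1), I would argue that $h$ must in fact fix a point of $\axis(g)$. Let $p\in \fix(h)$ and let $\pi\colon T\to \axis(g)$ be the nearest-point projection. Since $h$ is an isometry of $T$ preserving $\axis(g)$ setwise, $h$ commutes with $\pi$, so $\pi(p) = \pi(hp) = h(\pi(p))$; hence $\pi(p)\in \axis(g)\cap \fix(h)$. Now $h$ acts on the line $\axis(g)$ as an isometry with a fixed point, so this restriction is either the identity or a reflection. Because $h$ commutes with $g$, its restriction to $\axis(g)$ must commute with $g|_{\axis(g)}$, which is a non-trivial translation; no reflection of a line commutes with a non-trivial translation, so $h$ acts as the identity on $\axis(g)$. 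This gives $\axis(g)\subseteq \fix(h)$.

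For part (2), $h$ is hyperbolic, so $h$ has no fixed point in $T$; in particular $h$ cannot act as the identity or as a reflection on $\axis(g)$ (either of which would yield a fixed point). Hence $h|_{\axis(g)}$ is a non-trivial translation. But the axis of a hyperbolic tree isometry is the unique line on which it acts by non-trivial translation, so $\axis(h)=\axis(g)$.

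The only nontrivial step is the projection argument showing that an elliptic element preserving a line must fix a point of that line; everything else is a direct classification of isometries of a line combined with the commutation constraint. Once that projection argument is in hand, both parts fall out immediately, so I don't expect any serious obstacle.
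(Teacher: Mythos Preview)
Your argument is correct and is exactly the route the paper points to: the paper does not actually write out a proof, it only says the lemma ``follows easily'' from the observation that $h(\axis(g))=\axis(g)$, and your classification of the possible isometries of $\axis(g)$ together with the projection argument is a clean and standard way to make that precise. There is nothing to add.
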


For commuting elliptic elements we use the following.
\begin{lem} \label{l:fix intersect}\cite[Lemma 1.1]{GH}
 Suppose that a group $G$ acts on a tree $T$ and that $g,h \in G$ are commuting elements which both act elliptically on $T$.  Then $\mathrm{Fix}(g) \cap \mathrm{Fix}(h) \ne \emptyset$.
\end{lem}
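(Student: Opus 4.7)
The proof naturally splits into two cases, depending on whether some generator of $A(\Gamma)$ acts hyperbolically on $T$. In each case the strategy is to exhibit an edge $e$ of $T$ and a subgraph $\Lambda\subseteq\Gamma$ such that every vertex of $\Lambda$ fixes $e$ pointwise (forcing $A(\Lambda)\subseteq\stab(e)$) and such that $\Gamma\setminus\Lambda$ is disconnected.

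\textbf{Case 1: some generator $v$ is hyperbolic.} Set $L=\axis(v)$ and $H=\{w\in V(\Gamma):w \text{ is hyperbolic with }\axis(w)=L\}$. Define $\Lambda$ to be the set of vertices in $V(\Gamma)\setminus H$ adjacent in $\Gamma$ to some element of $H$. Lemma \ref{L1} shows every $w\in\Lambda$ is elliptic with $L\subseteq \fix(w)$, so $A(\Lambda)$ fixes every edge of $L$ pointwise. Moreover, $V(\Gamma)\neq H\cup\Lambda$: otherwise every generator stabilizes $L$ setwise, making $L$ an $A(\Gamma)$-invariant subtree, which by minimality forces $L=T$, contradicting that $T$ is not a line. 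Pick $u\notin H\cup\Lambda$. An induction on path length shows the connected component of $v$ in $\Gamma\setminus\Lambda$ lies inside $H$: any neighbor of an element of $H$ that avoids $\Lambda$ must, by Lemma \ref{L1}, be either elliptic with $L$ in its fix set (hence in $\Lambda$, contradiction) or hyperbolic with axis $L$ (hence in $H$). Since $u\notin H$, it lies in a different component.

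\textbf{Case 2: all generators act elliptically.} For each generator $w$, $\fix(w)$ is a non-empty subtree, and by Lemma \ref{l:fix intersect}, $\Gamma$-adjacent generators have intersecting fix sets. If every pair intersected, the Helly property for finitely many subtrees of a tree would yield a global fixed point, contradicting non-triviality. So there exist $u,v\in V(\Gamma)$ with $\fix(u)\cap\fix(v)=\emptyset$, and Lemma \ref{l:fix intersect} forces $u,v$ to be non-adjacent in $\Gamma$. Let $P$ be the bridge in $T$ from $\fix(u)$ to $\fix(v)$, with endpoints $p_u,p_v$, and let $e$ be any edge of $P$. Set $\Lambda=\{w\in V(\Gamma):e\subseteq\fix(w)\}$, so $A(\Lambda)$ fixes $e$.

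To prove $\Lambda$ separates $u$ from $v$, suppose there were a path $u=w_0,w_1,\ldots,w_n=v$ in $\Gamma\setminus\Lambda$. For each $i$ let $\pi_i$ denote the closest-point projection of $\fix(w_i)$ onto $P$; this coincides with $\fix(w_i)\cap P$ when that intersection is non-empty and is a single foot point otherwise. The bridge property gives $\pi_0=\{p_u\}$ and $\pi_n=\{p_v\}$, while Lemma \ref{l:fix intersect} applied to each consecutive adjacent pair $w_i,w_{i+1}$ gives $\fix(w_i)\cap\fix(w_{i+1})\neq\emptyset$, whose image under the projection lies in $\pi_i\cap\pi_{i+1}$. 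Hence $\bigcup_i\pi_i$ is a connected subpath of $P$ containing both its endpoints, so equals $P$. Since each $\pi_i$ is a simplicial subpath with vertex endpoints, the cover $\bigcup_i\pi_i\supseteq e$ forces some $\pi_i$ to contain $e$ entirely, yielding $e\subseteq\fix(w_i)$ and so $w_i\in\Lambda$, a contradiction. The subtlest step I anticipate is precisely this final projection-and-covering argument, and in particular the need to note that a cover of a single edge by simplicial subpaths with vertex endpoints forces one subpath to contain the whole edge.
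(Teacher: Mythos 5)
Your proposal does not address the statement at issue. What you have written is a proof of Theorem \ref{mainthm}, the main theorem of the paper, whereas the statement to be proved is Lemma \ref{l:fix intersect}: two commuting elliptic elements of a group acting on a tree have intersecting fixed-point sets. Indeed, your Case 2 invokes Lemma \ref{l:fix intersect} several times, so read as a proof of that lemma your argument is circular, and read as written it simply proves a different result. (For comparison, the paper itself does not prove this lemma; it quotes it from \cite[Lemma 1.1]{GH}.)

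For the record, a short correct argument for the lemma runs as follows. Suppose $\fix(g)\cap\fix(h)=\emptyset$. These are disjoint non-empty subtrees of $T$, so there is a unique geodesic segment (the bridge) $[p,q]$ joining them, with $p\in\fix(g)$ and $q\in\fix(h)$. Since $g$ and $h$ commute, $h\bigl(\fix(g)\bigr)=\fix(hgh^{-1})=\fix(g)$, so $h$ preserves $\fix(g)$ setwise while fixing $\fix(h)$ pointwise; it therefore maps the bridge to the bridge and in particular fixes its endpoint $p$. Thus $p\in\fix(g)\cap\fix(h)$, a contradiction.
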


%\begin{proof}
%Since $g$ and $h$ commute $g$ leaves the subtree $\mathrm{Fix}(h)$ invariant.  However, it also fixes $\mathrm{Fix}(g)$, which means that if these trees are disjoint $g$ must fix (pointwise) the unique shortest path $p$ between $\mathrm{Fix}(g)$ and $\mathrm{Fix}(h)$.  This implies that in fact $p$ must be contained in $\mathrm{Fix}(g)$, which means that the fixed trees are not in fact disjoint.  
%\end{proof}

\begin{proof}[Proof of Theorem \ref{mainthm}]
We identify vertices of $\Gamma$ with generators of $A(\Gamma)$ in the natural way. First we suppose that each vertex $v$ acts elliptically on $T$; in this case the proof is similar to the corresponding case in the proof of \cite[Theorem A]{GH}, but we include the details for the sake of completeness. Since the action is non-trivial, there must be some vertices $v$ and $u$ in $\Gamma$ such that $\fix(v)\cap \fix(u)=\emptyset$. Let $e$ be an edge in the tree $T$ on the path connecting the subtrees $\fix(v)$ and $\fix(u)$. Let $\Lambda$ be the induced subgraph of $\Gamma$ on the set of vertices which fix $e$. Clearly $A(\Lambda)\leq \stab(e)$. Now we will show that that path in $\Gamma$ from $u$ to $v$ must contain a vertex in $\Lambda$. Note that $\Lambda$ may be empty, in which case the proof will show that there is no path from $u$ to $v$, i.e. $\Gamma$ is disconnected. To that end, let $v=v_0, v_1,..., v_n=u$ be the vertices of a path from $v$ to $u$ in $\Gamma$. Note that $n\geq 2$ since Lemma \ref{l:fix intersect} implies that $u$ is not adjacent to $v$ in $\Gamma$. Since each $v_i$ is adjacent to $v_{i+1}$ in $\Gamma$ they must commute as elements of $A(\Gamma)$, and hence by Lemma \ref{l:fix intersect} $\fix(v_i)\cap \fix(v_{i+1})\neq \emptyset$ for $0\leq i\leq n-1$. It follows that there is a path in the tree $T$ from $\fix(v)$ to $\fix(u)$ which is contained in $\bigcup_{i=1}^{n-1} \fix(v_i)$. Since the path in $T$ from $\fix(v)$ to $\fix(u)$ is unique, the edge $e$ must belong to $\fix(v_i)$ for some $i$, and hence $v_i\in \Lambda$. Therefore, $v$ and $u$ are in different connected components of $\Gamma\setminus\Lambda$.

Now suppose that some vertex $v$ acts hyperbolically on $T$. Let $\Lambda$ be the subgraph induced by the set of vertices which act elliptically on $T$ and which fix the axis of $v$ point-wise. Then for any edge $e$ on this axis, $A(\Lambda)\leq\stab(e)$.

Now consider the connected component of $v$ in $\Gamma\setminus\Lambda$. Let $u$ be a vertex in this component, and let $v=v_0, ..., v_n=u$ be a path from $v$ to $u$. Notice that $v_1$ must be hyperbolic, because elliptic elements which commute with $v$ will belong to $\Lambda$ by Lemma \ref{L1}. But then $v$ and $v_1$ have the same axis by Lemma \ref{L1}. Repeating this argument, we get that $v_2,..., u$ are all hyperbolic with the same axis as $v$. But since the action of $A(\Gamma)$ is minimal and $T$ is not a line, there must exist some vertex $w$ which does not set-wise fix the axis of $v$. Hence $v$ and $w$ are in different connected components of $\Gamma\setminus\Lambda$.

\end{proof}

\end{document}